\def\<{{\langle}} 
\def\>{{\rangle}}
\def\note#1{{}}
\def\note#1{} 
\def\cO{{\mathcal O}}
\def\lend#1#2{{{\rm End}\sb{#1}(#2)}}
\def\beq{\begin{equation}} 
\def\eeq{\end{equation}}
\def\id{\mathrm{id}} 
\def\im{{\rm Im}}
\newcounter{zlist} 
\newcounter{blist} 
\newcounter{rlist} 
 \def\stac#1{\raise-.2cm\hbox{$\stackrel{\displaystyle\otimes}{\scriptscriptstyle{#1}}$}}
\def\cten#1{\raise-.2cm\hbox{$\stackrel{\displaystyle\widehat{\otimes}}{\scriptscriptstyle{#1}}$}}
\def\Label#1{\label{#1}\ifmmode\llap{[#1] }\else 
\marginpar{\smash{\hbox{\tiny [#1]}}}\fi} 
\def\Label{\label} 
\newtheorem{proposition}{Proposition}[section]
\theoremstyle{definition} 
\newtheorem{definition}[proposition]{Definition}
\theoremstyle{remark} 
\newtheorem{remark}[proposition]{Remark} 
\newcounter{c} 
\newcommand{\etyk}[1]{\vspace{-7.4mm}$$\begin{equation}\Label{#1} 
\addtocounter{c}{1}} 
\renewcommand{\]}{\ifnum \value{c}=1 $$\else \end{equation}\fi} 
\def\ZZ{{\mathbb Z}}
\newcommand{\Cc}{\mathcal{C}}
\def\*C{{}^*\hspace*{-1pt}{\Cc}}
\def\text#1{{\rm {\rm #1}}}
\def\1{\mathbf{1}}
\newcounter{mnotecount}[section]
\renewcommand{\themnotecount}{\thesection.\arabic{mnotecount}}
\newcommand{\mnote}[1]%{}
{\protect{\stepcounter{mnotecount}}$^{\mbox{\footnotesize
$%\!\!\!\!\!\!\,
\bullet$\themnotecount}}$ \marginpar{%\color{red}%
\raggedright \tiny\em
$\bullet$\themnotecount: #1} }
\begin{document} 

\title{Twisted reality condition for Dirac operators} 
\author[T.\  Brzezi\'nski]{Tomasz Brzezi\'nski}
\address{Department of Mathematics, Swansea University, Swansea SA2 8PP, U.K.\ \newline 
\indent Department of Mathematics, University of Bia{\l}ystok, K.\ Cio{\l}kowskiego  1M,
15-245 Bia\-{\l}ys\-tok, Poland}
\email{T.Brzezinski@swansea.ac.uk}   

\author[N.\ Ciccoli]{Nicola Ciccoli}
\address{Dipartimento di Matematica e Informatica, Universit\'a di Perugia, via Vanvitelli 1,  I-06123 Perugia, Italy}
\email{ciccoli@dipmat.unipg.it}
 
\author[L.\ D\k{a}browski]{Ludwik D\k{a}browski}
\address{SISSA (Scuola Internazionale Superiore di Studi Avanzati), Via Bonomea 265, 34136 Trieste, Italy} 
\email{dabrow@sissa.it} 

\author[A.\ Sitarz]{Andrzej Sitarz} 
 \address{Institute of Physics, Jagiellonian University,
prof.\ Stanis\l awa \L ojasiewicza 11, 30-348 Krak\'ow, Poland.\newline\indent
 Institute of Mathematics of the Polish Academy of Sciences,
\'Sniadeckich 8, 00-950 Warszawa, Poland.}
\email{andrzej.sitarz@uj.edu.pl}   

%%%%\date{\today} 
\subjclass[2010]{58B34, 58B32, 46L87} 
%%%%%%%%%%%%%%%%%%%%%%%%%%%%%%%%%%%%%%%%%%%%%%%%%%%%  
\begin{abstract} 
Motivated by examples obtained from conformal deformations of spectral triples and a spectral triple
construction on quantum cones, we propose a new twisted reality condition for the Dirac operator.
\end{abstract} 
\maketitle 
 
%%%%%%%%%%%%%%%%%%%%%%%%%%%%%%%%%%%%%%%%%%%%%%%%%%%%  
\section{Introduction}
\setcounter{equation}{0}  
In \cite{Connes:1995} Connes proposed that real spectral triples should be understood as a noncommutative counterpart of spin manifolds. This proposal was further supported by establishing in \cite{Connes:2013} 
 a one-to-one correspondence between classical spin 
geometries and real spectral triples for commutative algebras of functions over manifolds. 
Yet, the number of examples of genuine noncommutative real spectral triples is limited. So far, 
the list includes finite spectral triples \cite{Kr98, PaSi98}, noncommutative tori \cite{Connes:1995} and their 
subalgebras: noncommutative Bieberbach manifolds \cite{OlSi13} and the quantum pillow \cite{BrzSi14},
in some cases with a full classification of inequivalent, irreducible and equivariant real spectral 
triples \cite{PaSi06, Ve, OlSi13}. All further examples of so-called $\theta$-twisted or isospectral 
deformations \cite{CoLa, Si01, Va} are essentially obtained by using the construction of real spectral triples over 
the noncommutative torus or over the Moyal plane \cite{GGBISV}. The example of the Podle\'s standard 
quantum sphere \cite{DaSi03} was the 
only non-flat case not obtained as a $\theta$-twist;  however,  it failed to satisfy some other natural conditions 
set for spectral geometries. 
In fact,  with the help of Poisson geometry, it was proved in \cite{Ha04} that if a noncommutative spectral triple 
is a deformation of the real spectral triple of functions on a 2-dimensional smooth manifold, then the 
underlying Riemannian manifold can only be either the flat torus or the round sphere. On the other hand, some interesting examples, such as the quantum group $SU_q(2)$ \cite{DLSSV} allow for
an {\em almost real} spectral triple, with a slightly modified order-one condition, which 
fails to be exact. 

In this note, we propose a modified definition of a real spectral triple, in which only the reality structure is generalised.
Thus in contrast to an interesting recent paper \cite{LaMa} we remain in the framework of {\em bona fide} 
spectral triples here. We show that this definition allows for
fluctuations of Dirac operators, which do not change the bimodule of one forms. Finally, we 
illustrate the theory with two examples, which motivate the proposed definition.

\section{Twisted reality}
\setcounter{equation}{0}
\subsection{Reality twisted by a linear automorphism}\label{sec.twist}
 
Let $A$ be a complex $*$-algebra and let $(H,\pi)$ be a (left) representation of 
$A$ on a complex vector space $H$. A linear automorphism $\nu$ of $H$ defines an 
algebra automorphism
$$
\bar{\nu}: \lend {}H \to \lend{}H, \qquad \phi\mapsto \nu\circ\phi\circ\nu^{-1}.
$$
The inverse of $\bar{\nu}$ is $\phi\mapsto \nu^{-1}\circ\phi\circ \nu$.
Since $\bar{\nu}$ is an algebra map, the composite  
$$
\pi^\nu: \xymatrix{ A \ar[rr]^-{\pi} && \lend{} H \ar[rr]^{\bar{\nu}} && \lend{} H}
$$
is an algebra map too, and hence it defines a new representation $(H,\pi^\nu)$ of $A$.
The map $\nu$ is  an isomorphism that intertwines  $(H,\pi)$ with $(H,\pi^\nu)$.

\begin{definition}\label{def.reality}
Let $A$ be a complex $*$-algebra, $(H,\pi)$ a representation of $A$, and let $D$ be a linear operator 
on $H$.  
Let $\nu$ be a linear automorphism of $H$, such that there exists an algebra automorphism  $\hat\nu:A\to A$ satisfying 
\begin{equation}\label{proper}
\pi \circ \hat\nu = \bar\nu\circ \pi = \pi^\nu.
\end{equation}
We say that the triple $(A,H,D)$ admits 
a {\em $\nu$-twisted real structure} if there exists an anti-linear map $J: H\to H$ such 
that $J^2 = \epsilon\, \id$, and, for all $a,b\in A$,
\begin{equation}\label{o0c}
 [\pi(a), J\pi(b)J^{-1}] =0, 
\end{equation}
\begin{equation}\label{to1c}
 [D,\pi(a)] J \bar{\nu}^2(\pi(b))J^{-1} = J \pi(b)J^{-1}[D,\pi(a)], 
\end{equation}
\begin{equation}\label{tc}
DJ\nu = \epsilon' \nu JD, 
\end{equation}
\begin{equation}\label{reg}
\nu J\nu = J, 
\end{equation}
where $\epsilon,\epsilon' \in \{+,-\}$. 

If $(A,H,D)$ admits a grading operator $\gamma: H\to H$, $\gamma^2 = \id$, $[\gamma,\pi(a)] =0$, for all $a\in A$, $\gamma D = -D\gamma$, 
and $\nu^2 \gamma = \gamma\nu^2 $,
then the twisted real structure $J$ is also required to satisfy
\begin{equation}\label{gc}
\gamma J = \epsilon''J\gamma,
\end{equation}
where $\epsilon''$ is another sign.
\end{definition}

This purely algebraic definition of twisted reality is motivated by and aimed at being applicable to 
spectral triples. In this case $H$ is a Hilbert space, and $\pi$ is a star representation of $A$ 
by bounded operators. 
The operator $D$ is assumed to be a (essentially) selfadjoint operator on the dense domain $H_0$ with a compact resolvent, and such that for every $a \in A$ the commutators $[D, \pi(a)]$ are bounded. The operator $J$ is antiunitary. The grading $\gamma$ (in case of an even spectral triple) is self-adjoint.
Besides the usual analytic properties of the data of the spectral triple, the map $\nu$ is then a 
(essentially) selfadjoint invertible operator defined on the domain containing $H_0$ and with the requirement that $\bar{\nu}$ maps $\pi(A)$ into bounded operators. More precisely, 
$\forall a\!\in\! A$, $\nu\pi(a)\nu^{-1}$ preserves $H_0$ and is bounded on $H_0$, and so extends to an element of $B(H)$.
Furthermore \eqref{tc} and \eqref{reg} should be understood as operators $H_0\to H_0$.
In such a case we shall say that a spectral triple 
admits a {\em $\nu$-twisted real structure},
or simply that it is a {\em $\nu$-twisted real spectral triple}. 

The signs $\epsilon, \epsilon',\epsilon''$ determine the $KO$-dimension 
modulo 8 in the usual way 
\cite{Connes:1995}.
Often \eqref{o0c} is called the {\em order-zero condition}, \eqref{to1c} is called the {\em twisted order-one condition},  while  we shall refer to \eqref{tc} as to the {\em twisted
$\epsilon'$-condition}, and to \eqref{reg} as to the {\em twisted regularity} (with the adjective twisted omitted if $\nu = \hbox{id}$).  Note that 
the twisted order-one condition \eqref{to1c} can be equivalently stated as 
\begin{equation}\label{to1cr}
 [D,\pi(a)] J \pi(\hat{\nu}(b))J^{-1} = J \pi(\hat{\nu}^{-1}(b))J^{-1}[D,\pi(a)].
 \end{equation}
Furthemore, the twisted order-one condition can be understood as right $A$-linearity as follows. Given a representation $\pi$ of a $*$-algebra $A$ in $H$, a linear automorphism $\nu$ of $H$ and an anti-linear automorphism $J$ of $H$, one can define a right $A$-module structure on $H$ by
 \begin{equation}\label{right}
 h\cdot a := J \bar\nu(\pi(a^*)) J^{-1}(h) = \pi^{J\circ\nu}(a^*)(h), \qquad \mbox{for all $a\in A$, $h\in H$}.
 \end{equation}
 We denote $H$ with this $A$-module structure by $H^{J,\nu}$. Then the first-order condition \eqref{to1c} simply states that, for all $a\in A$, the commutators $[D,\pi(a)]$ are right $A$-linear maps from $H^{J,\nu^2}$ to $H^{J,\id}$.

 The relation \eqref{proper} between $\hat\nu$ and $\nu$ can be also termed as ``$\nu$ implements an automorphism $\hat\nu$ in the representation $\pi$".

\begin{remark}\label{rem.proper}
One can contemplate a more general notion of the $\nu$-twisted reality defined as in Definition~\ref{def.reality} except the existence of $\hat\nu$ satisfying \eqref{proper}. Note, however, that if $(H,\pi)$ is a faithful representation, then $\pi$ is a monomorphism and hence $A$ is isomorphic to $\pi(A)$. Therefore, if in addition, 
\begin{equation}\label{image}
\im (\pi^\nu) \subseteq \im(\pi),
\end{equation}
 then the restriction of $\bar{\nu}$ to $\pi(A)$ defines a unique automorphism $\hat\nu$ of the algebra $A$ that satisfies \eqref{proper}. All the examples discussed in Section~\ref{sec.examples} come with faithful representations $\pi$ and automorphisms $\nu$ satisfying condition \eqref{image}.
 Finally, we note in passing that one can always work with a faithful representation of the algebra $A/{\rm ker}\pi$.
\end{remark}

\subsection{Twisted fluctuations}

In the setup of Definition~\ref{def.reality}, let $\Omega^1_D$ be a bimodule of one-forms:
$$ \Omega^1_D :=\left\{ \sum_i \pi(a_i)[D,\pi(b_i)], \,\,a_i,b_i\in A \right\}, $$
where the indices $i$ run over a finite set. 

The standard fluctuation of a spectral triple $(A,H,D)$ consists of adding to the 
Dirac operator $D$ a selfadjoint one-form $\alpha \in \Omega^1_D$. 
In the case of a real spectral triple, the fluctuated Dirac operator, $D$, becomes 
$D+\alpha+\epsilon' J\alpha J^{-1}$, where $\alpha+\epsilon' J\alpha J^{-1}$ 
is selfadjoint. For a $\nu$-twisted real spectral triple we set the fluctuated 
Dirac operator $D_\alpha$ to be:
$$ 
D_\alpha:= D + \alpha + \epsilon' \nu J \alpha J^{-1} \nu,
$$
with the requirement that $\alpha + \epsilon' \nu J \alpha J^{-1} \nu $ is selfadjoint. 
We shall often use the shorthand notation $\alpha '=  \nu J \alpha J^{-1} \nu$.

\begin{proposition}
If $(A,H,D)$ with $J\!\in\!\lend{} H$ is a  
 $\nu$-twisted real spectral triple, then  
$(A,H,D_\alpha)$ with (the same) $J$ is also a 
 $\nu$-twisted real spectral triple. 
If $(A,H,D)$ is even with grading $\gamma$,
then $(A,H,D_\alpha)$ is even with (the same) 
grading $\gamma$.
The composition of twisted fluctuations is a twisted fluctuation.
\end{proposition}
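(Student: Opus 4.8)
The plan is to verify each of the defining conditions \eqref{o0c}--\eqref{gc} for the fluctuated datum $(A,H,D_\alpha)$, keeping $J,\nu,\gamma$ fixed. First note that \eqref{o0c}, \eqref{reg} and, in the even case, \eqref{gc} do not mention the Dirac operator, so they are inherited verbatim; and since $\alpha+\epsilon'\alpha'$ is a bounded selfadjoint operator, $D_\alpha$ is again (essentially) selfadjoint with compact resolvent, and $[D_\alpha,\pi(a)]=[D,\pi(a)]+[\alpha+\epsilon'\alpha',\pi(a)]$ is bounded. So the genuine content is the twisted order-one condition \eqref{to1c}, the twisted $\epsilon'$-condition \eqref{tc}, the grading anticommutation $\gamma D_\alpha=-D_\alpha\gamma$ in the even case, and the composition statement.

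For \eqref{to1c} I would use the right-module reformulation behind \eqref{right}: writing $r_k(b):=J\bar\nu^{k}(\pi(b))J^{-1}=J\pi(\hat\nu^{k}(b))J^{-1}$, condition \eqref{to1c} says exactly that $[D,\pi(a)]$ intertwines $r_2$ and $r_0$, i.e. $[D,\pi(a)]\,r_2(b)=r_0(b)\,[D,\pi(a)]$, while \eqref{o0c}, applied to $\hat\nu^{k}(b)\in A$, says that each $\pi(a_i)$ commutes with every $r_k(b)$. Since $\alpha=\sum_i\pi(a_i)[D,\pi(b_i)]$, these two facts give at once $\alpha\,r_2(b)=r_0(b)\,\alpha$, and hence that $[\alpha,\pi(a)]$ also intertwines $r_2$ and $r_0$. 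The key observation is that the reflected form drops out entirely: conjugating $\alpha\,r_2(b)=r_0(b)\,\alpha$ by $J$ and using $J^2=\epsilon\,\id$ gives $(J\alpha J^{-1})\pi(\hat\nu^2(b))=\pi(b)(J\alpha J^{-1})$, and feeding in $\nu\pi(a)=\pi(\hat\nu(a))\nu$ then yields $\alpha'\pi(a)=\nu(J\alpha J^{-1})\pi(\hat\nu(a))\nu=\pi(a)\,\nu(J\alpha J^{-1})\nu=\pi(a)\alpha'$, so $[\alpha',\pi(a)]=0$. Thus $[D_\alpha,\pi(a)]=[D,\pi(a)]+[\alpha,\pi(a)]$ intertwines $r_2$ and $r_0$, which is \eqref{to1c} for $D_\alpha$.

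For \eqref{tc} I would compute $D_\alpha J\nu-\epsilon'\nu JD_\alpha$ term by term. The $D$-part vanishes by \eqref{tc} for $D$. Using twisted regularity $\nu J\nu=J$ one finds $\alpha'J\nu=\nu J\alpha J^{-1}(\nu J\nu)=\nu J\alpha$, so the $\epsilon'\alpha'$-part cancels against the $\epsilon'\nu J\alpha$-part; and combining $\nu J\nu=J$ with $J=\epsilon J^{-1}$ (from $J^2=\epsilon\,\id$) gives $\nu J\alpha'=J^2\alpha J^{-1}\nu=\epsilon\,\alpha J^{-1}\nu=\alpha J\nu$, so the remaining $\alpha$-part cancels against the $\nu J\alpha'$-part. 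Hence \eqref{tc} holds for $D_\alpha$.

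The even case and the composition are the last steps. For even-ness it suffices to show $\alpha+\epsilon'\alpha'$ is odd; $\alpha$ is odd because $[\gamma,\pi(a)]=0$ and $\gamma D=-D\gamma$ force each $[D,\pi(b_i)]$ to anticommute with $\gamma$. Writing $\alpha'=J(\nu^{-1}\alpha\nu^{-1})J^{-1}$ (again via $\nu J\nu=J$) and using \eqref{gc}, oddness of $\alpha'$ reduces to oddness of $\nu^{-1}\alpha\nu^{-1}$, and thence to the commutation $\gamma\nu=\nu\gamma$. \emph{This is the main obstacle}, since the definition only assumes $\gamma\nu^2=\nu^2\gamma$. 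I would close the gap in the spectral-triple setting, where $\nu$ is a positive selfadjoint operator: conjugation by the selfadjoint unitary $\gamma$ is a positivity-preserving $*$-automorphism, so $\gamma\nu\gamma$ is a positive square root of $\gamma\nu^2\gamma=\nu^2$, whence $\gamma\nu\gamma=\nu$ by uniqueness of positive square roots. For the composition I would first prove $\Omega^1_{D_\alpha}\subseteq\Omega^1_D$: since $[\alpha',\pi(d)]=0$ we have $[D_\alpha,\pi(d)]=[D,\pi(d)]+[\alpha,\pi(d)]$, and a Leibniz expansion rewrites $[\alpha,\pi(d)]$ as a finite sum of terms $\pi(\cdot)[D,\pi(\cdot)]\in\Omega^1_D$. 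Then for $\beta\in\Omega^1_{D_\alpha}\subseteq\Omega^1_D$, additivity of $\alpha\mapsto\alpha'=\nu J\alpha J^{-1}\nu$ gives $(D_\alpha)_\beta=D+(\alpha+\beta)+\epsilon'(\alpha+\beta)'=D_{\alpha+\beta}$, with $\alpha+\beta\in\Omega^1_D$ and $(\alpha+\beta)+\epsilon'(\alpha+\beta)'$ selfadjoint, exhibiting the composite as a single twisted fluctuation.
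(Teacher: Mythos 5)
Your treatment of the twisted order-one condition, the twisted $\epsilon'$-condition, and the composition of fluctuations is correct and is, up to packaging and the order of steps, the paper's own argument: the paper's pivotal step is precisely your identity $[\alpha',\pi(a)]=0$, obtained there from the same ingredients (condition \eqref{proper} to slide $\pi(a)$ through $\nu$, then \eqref{o0c} together with the $J$-conjugated form \eqref{to1cr} of the twisted order-one condition), after which $[D_\alpha,\pi(a)]=[D,\pi(a)]+[\alpha,\pi(a)]$ yields both \eqref{to1c} for $D_\alpha$ and the invariance of $\Omega^1_D$ that underlies the composition claim; and your two cancellations $\alpha' J\nu=\nu J\alpha$ and $\nu J\alpha'=\alpha J\nu$ are exactly the paper's verification of \eqref{tc} from \eqref{reg}.

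The one genuine divergence is the grading, which the paper dismisses with ``easily verified''. You are right to flag it: with only the hypothesis $\nu^2\gamma=\gamma\nu^2$ of Definition~\ref{def.reality}, oddness of $\alpha'=J(\nu^{-1}\alpha\nu^{-1})J^{-1}$ reduces, as you say, to oddness of $\nu^{-1}\alpha\nu^{-1}$, equivalently of $\bar\nu(\alpha)=\nu\alpha\nu^{-1}$ (the two differ by the factor $\nu^{-2}$, which commutes with $\gamma$), and no axiom controls $\nu D\nu^{-1}$: conditions \eqref{tc} and \eqref{reg} only control the symmetric twist, namely $\nu^{-1}D\nu^{-1}=\epsilon'\, JDJ^{-1}$. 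So this step genuinely needs an input beyond the stated definition. Be aware, however, that your repair imports an assumption the paper does not make: its analytic setting declares $\nu$ to be (essentially) selfadjoint and invertible, not positive, so the appeal to uniqueness of positive square roots is an added hypothesis rather than a consequence of the framework. It is harmless in practice: in both example classes of Section~\ref{sec.examples} one even has $\gamma\nu=\nu\gamma$ outright (in Proposition~\ref{propconf} because $k$ and $k'=JkJ^{-1}$ commute with $\gamma$; on the quantum cone because $\nu$ preserves the degree decomposition that defines $\gamma$), and either that commutation relation or your positivity assumption closes the gap. In short, your proof matches the paper where the paper is complete, and is more careful than the paper exactly where it is not.
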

\begin{proof}
As a perturbation of $D$ by a bounded selfadjoint operator, the fluctuated Dirac operator 
$D_\alpha$ is selfadjoint, has bounded commutators with $\pi(a)\in A$ and has compact resolvent.

First, we shall demonstrate that a fluctuation of the fluctuated Dirac operator is also a fluctuation. 
In other words, the bimodule of one forms is independent of the choice of $\alpha$. 
Let us take any $a \in A$ and $ \alpha \in \Omega^1_D$, and  
compute:
$$
\begin{aligned}
\, [ \alpha' , \pi(a)] & = \nu J \alpha J^{-1} \nu \pi(a)  - \pi(a) \nu J \alpha J^{-1} \nu \\
& = \nu J \alpha J^{-1} \nu \pi(a)  - \nu(\nu^{-1}\pi(a) \nu) J \alpha J^{-1} \nu \\
& = \nu J \alpha J^{-1} \nu \pi(a) - \nu \pi (\hat\nu^{-1}(a) ) J \alpha J^{-1} \nu \\
& =  \nu J \alpha J^{-1} \nu \pi(a) - \nu J \alpha J^{-1} \pi (\hat\nu(a)) \nu 
\\
& =  \nu J \alpha J^{-1} \nu \pi(a) - \nu J \alpha J^{-1} \left(\nu  \pi (a) \nu^{-1}  \right) \nu
 = 0,
\end{aligned}
$$
by \eqref{proper}, \eqref{o0c} and \eqref{to1cr}. Therefore, for any $ \alpha \in \Omega^1_D$ and $a \in A$:
$$ [D_\alpha, \pi(a)] = [D,\pi(a)] + [\alpha, \pi(a)], $$
but then the last commutator is again a one-form.  
For this reason, the fluctuated Dirac operator $D_\alpha$ does satisfy the $\nu$-twisted order-one
condition (\ref{to1c}).
The statement about the grading $\gamma$ is easily verified.

To finish the proof, it remains only to check that $D_\alpha$ satisfies the compatibility
relation with $J$ (\ref{tc}), that is:
$$ D_\alpha J\nu =\epsilon' \nu JD_\alpha. $$ 
Since $D$ itself satisfies \eqref{tc}, suffices it to check it for $\alpha + \epsilon' \alpha'$:
$$
\begin{aligned}
 (\alpha + \epsilon' \alpha') J \nu &= \alpha J \nu + \epsilon' \nu J \alpha J^{-1} \nu J \nu \\
 &=  \alpha J \nu  + \epsilon' \nu J \alpha \\
 &=  \epsilon' \nu J \left( \alpha + \epsilon' J^{-1} \nu^{-1} \alpha J \nu \right) \\
 &= \epsilon' \nu J \left( \alpha + \epsilon' \alpha' \right).
 \end{aligned}
$$
where we have used (\ref{reg}).
\end{proof}

\section{Examples}\label{sec.examples}

The definitions in Section~\ref{sec.twist}  are motivated by two classes of examples. 
The first class arises from conformally rescaled spectral geometries, which (albeit in a different 
setting) were first proposed by Connes and Tretkoff \cite{CoTr} and led to twisted spectral triples. 
The setting using spectral triples instead was proposed by D\k{a}browski and Sitarz 
\cite{DaSi15} for more general modifications. It appears, however, that for conformal rescaling 
the setup is quite robust, not only keeping most of the properties of spectral geometries (like 
the Hochschild cycle condition \cite{Si15}), but also making the conformally rescaled 
noncommutative tori into quantum metric spaces in the sense of Rieffel \cite{La15}.

The second class of examples comes from $q$-deformed geometries, where modular twisting 
automorphisms naturally appear and are significant for cyclic cohomology \cite{HaKr} and modular
Fredholm module constructions, as explained for example in \cite{ReSiYa13}. It is worth pointing out that the 
operator implementing the reality is intrinsically related to the Tomita-Takesaki operator, which is usually 
nontrivial in  $q$-deformed geometry.

\subsection{Conformally transformed Dirac operators}
\setcounter{equation}{0}

Let us assume that we have a real spectral triple $(A,H,D, J)$ with reality operator
$J$ and fixed signs $\epsilon, \epsilon'$. Let $k \in \pi(A)$ be positive and 
invertible such that $k^{-1}$ is also bounded, and let $k' := J k J^{-1}$.

\begin{proposition}\label{propconf}
If $(A,H,D,J)$ is a real spectral triple, which satisfies order-one condition, then for:
$$D_k =  k' D k', \qquad \nu(h) = (k^{-1} k') \, (h),$$
the triple $(A, H, J, D_k, \nu)$ is a $\nu$-twisted real spectral triple.
If furthermore $(A,H,D,J)$ is even with grading $\gamma$,
then $(A,H,D_k, J, \nu)$ is even with (the same) grading $\gamma$.
\end{proposition}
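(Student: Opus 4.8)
The plan is to reduce every twisted condition for $(A,H,J,D_k,\nu)$ to the corresponding untwisted condition for $(A,H,D,J)$, exploiting two structural facts coming from the original axioms. Write $k=\pi(c)$ with $c\in A$ positive and invertible (so $k^{-1}=\pi(c^{-1})$), and set $k'=JkJ^{-1}$. Since $J$ is antiunitary and $k$ is positive, $k'$ is again bounded, positive and invertible with bounded inverse; hence $D_k=k'Dk'$ is (essentially) selfadjoint, has compact resolvent, and $[D_k,\pi(a)]$ is bounded for every $a$, so the analytic data of a spectral triple are in place. The two facts I will lean on throughout are: (i) $k'$ commutes with all of $\pi(A)$ and, in particular, with $k$ itself, by the order-zero condition \eqref{o0c} applied to $k'=J\pi(c)J^{-1}$; and (ii) every commutator $[D,\pi(a)]$ commutes with $J\pi(b)J^{-1}$ for all $b$, in particular with $k'=J\pi(c)J^{-1}$, which is exactly the (untwisted) order-one condition assumed in the hypothesis.

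First I would record the elementary $J$-identities $k'J=Jk$ and $Jk'=kJ$, both immediate from $k'=JkJ^{-1}$ together with $J^2=\epsilon\,\id$ (so $J^{-1}=\epsilon J$). Using (i), conjugation by $\nu=k^{-1}k'$ collapses on $\pi(A)$: $\bar\nu(\pi(a))=\nu\pi(a)\nu^{-1}=k^{-1}\pi(a)k=\pi(c^{-1}ac)$, so that $\hat\nu(a):=c^{-1}ac$ is an inner automorphism of $A$ satisfying \eqref{proper}, and the image condition \eqref{image} of Remark~\ref{rem.proper} holds. Again by (i), pulling $k'$ through $\pi(a)$ gives the key factorisation $[D_k,\pi(a)]=k'[D,\pi(a)]k'$.

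Next I would verify the twisted conditions in turn. The order-zero condition \eqref{o0c} is unchanged, since $J$ and $\pi$ are unchanged. For twisted regularity \eqref{reg}, a two-line computation suffices: $\nu J\nu=k^{-1}k'Jk^{-1}k'=k^{-1}Jk'=\epsilon\,k^{-1}kJ^{-1}=\epsilon J^{-1}=J$, using $k'J=Jk$ and $J^{-1}=\epsilon J$. For the twisted $\epsilon'$-condition \eqref{tc} I would compute $D_kJ\nu=k'DJk'$ (telescoping with $k'J=Jk$), replace $DJ=\epsilon'JD$, and compare with $\epsilon'\nu JD_k$; the two sides agree precisely because $k$ and $k'$ commute by (i), so the residual factors can be matched. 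For the twisted order-one condition \eqref{to1c} I would substitute $[D_k,\pi(a)]=k'[D,\pi(a)]k'$ and $\bar\nu^2(\pi(b))=k^{-2}\pi(b)k^2$, conjugate by $J$ using $Jk^{\pm2}J^{-1}=(k')^{\pm2}$, and then invoke (ii) to move $[D,\pi(a)]$ past every power of $k'$ and past $J\pi(b)J^{-1}$; both sides then reduce to the common expression $J\pi(b)J^{-1}(k')^2[D,\pi(a)]$.

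Finally, for the even case I would check that $\gamma$ commutes with $k'$ (combining $[\gamma,\pi(c)]=0$ with $\gamma J=\epsilon''J\gamma$), whence $\gamma$ commutes with $\nu$, giving both $\nu^2\gamma=\gamma\nu^2$ and $\gamma D_k=k'\gamma Dk'=-k'Dk'\gamma=-D_k\gamma$; condition \eqref{gc}, $\gamma J=\epsilon''J\gamma$, is inherited verbatim, so the signs $\epsilon,\epsilon',\epsilon''$ are unchanged. The genuinely delicate point, rather than the mechanical bookkeeping, is the interplay of the two algebras: the whole argument hinges on recognising that the untwisted order-one condition is exactly what lets $[D,\pi(a)]$ commute with $k'\in J\pi(A)J^{-1}$, so that the conjugating factors $k'$ flanking $D$ in $D_k$ pass harmlessly through every commutator. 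I would also keep an eye on the standing assumption that conjugation by $k$ preserves $\pi(A)$, which here is guaranteed by $c^{-1}\in A$ and is what makes $\hat\nu$ well defined.
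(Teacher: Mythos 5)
Your proposal is correct and takes essentially the same route as the paper's proof: both use the order-zero condition to get $\bar\nu(\pi(a))=k^{-1}\pi(a)k$ and the factorisation $[D_k,\pi(a)]=k'[D,\pi(a)]k'$, then use the untwisted order-one condition to slide $[D,\pi(a)]$ past $k'$ and $J\pi(b)J^{-1}$, with the identities $k'J=Jk$, $Jk'=kJ$ handling \eqref{reg} and \eqref{tc} and the grading checked exactly as you do. The remaining differences are purely cosmetic (e.g.\ you verify \eqref{tc} by computing $D_kJ\nu$ directly, the paper computes $JD_k$), so nothing further is needed.
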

\begin{proof}
Since $k$ and $k'$ are bounded operators, it is clear that $\bar{\nu}$ maps bounded 
operators to bounded operators, and due to \eqref{o0c}, for all $a \in A$:
$$ \bar{\nu}(\pi(a)) = k^{-1} \pi(a) k. $$
Clearly $ \hat{\nu}(a) = (\tilde k)^{-1} a \tilde k $,
where $\tilde k\in A$ is such that $\pi(\tilde k)=k$, satisfies \eqref{proper}.

By the properties of the real spectral triple $(A,H,D,J)$, $k'(\mathrm{Dom}\,D)\subset \mathrm{Dom}\,D$ 
and the operator $D_k $ is obviously selfadjoint  on the domain $\mathrm{Dom}\,D$,
and due to \eqref{o0c} has bounded commutators with arbitrary $a \in A$. Furthermore, $D_k$ 
has compact resolvent: this is obvious if $D$ is invertible since then 
 $D_k^{-1}= k^{-1} D^{-1} k^{-1}$ and $D^{-1}$ is compact and $k^{-1}$ bounded by assumption.
Otherwise, we can employ some invertible bounded perturbation of $D$ on the kernel of $D$.

We show now that $D_k$ satisfies the twisted order-one condition \eqref{to1c}:
$$
\begin{aligned}
J \pi(b)J^{-1} & [D_k, \pi(a)]   = 
J \pi(b)J^{-1} JkJ^{-1} [D,\pi(a)] JkJ^{-1}
 \\
&=   k' [D,\pi(a)] k' J (k^{-2} \pi(b) k^{2})  J^{-1} 
  =  [D_k, \pi(a)] \,J \bar{\nu}^2(\pi(b)) J^{-1}.
\end{aligned}
$$
Next we check \eqref{reg}:
$$ \nu J \nu = k^{-1} J k J^{-1} J  k^{-1} J k J^{-1} = J. $$
Finally, if $JD = \epsilon' DJ$ then for $D_k$ we have:
$$ J D_k = J k' J^{-1} J D k' = \epsilon' k D J k' =\epsilon' k (k')^{-1} D_k (k')^{-1} k J,$$
so that \eqref{tc} is satisfied
$$ \nu J D_k =\epsilon' D_k J \nu. $$
The statement about the grading $\gamma$ is easily verified.
\end{proof}

\begin{remark} 
Clearly the KO-dimension of $(A, H, D_k, J, \nu)$ is the same as of $(A, H, D, J)$. In the `classical' 
case of a manifold $M$ and (commutative) $A=C^\infty(M)$ represented in the usual way on $L^2$-sections of some vector bundle on $M$ with $\mathrm{Ad}_J$ being the complex conjugation, the conformal twists $\nu$ are always trivial as $ J k J^{-1} = k$ for a positive $k$ and hence $\nu = \hbox{id}$.
\end{remark}

\begin{remark}\hspace{-5pt}\footnote{We thank Tomasz Maszczyk for raising this point.}\label{propconf2}
Using arguments similar to those in the proof of Proposition~\ref{propconf}, one can show that if  $(A,H,D, J, \nu)$ is a $\nu$-twisted real spectral triple which satisfies the twisted order-one condition, then, for all $k$ as in Proposition~\ref{propconf} such that $\bar{\nu}(k k') = k k'$,  $(A, H, D_k, J, \mu)$ is a $\mu$-twisted real spectral triple, where 
$$
D_k =  k' D k', \qquad \mu(h)  =  k' \nu k^{-1}\, (h).
$$
 The grading $\gamma$, if it exists, is again unchanged.
\end{remark}

\subsection{Twisted reality of Dirac operators on quantum cones}
\setcounter{equation}{0}
The coordinate algebra of the {\em quantum disc} $\cO(D_{q})$ is a complex $*$-algebra generated by $z$, subject to the relation
\begin{equation}\label{disc}
z^*z-q^2zz^* = 1-q^2,
\end{equation}
 where $q\in (0,1)$; see \cite{KliLes:two}. $\cO(D_q)$ can be understood as a $\ZZ$-graded algebra
 $$
 \cO(D_q)  = \bigoplus_{n\in \ZZ}\cO(D_q)_n,
 $$
 with the degrees given on the generators by $|z| = - |z^*| =1$. For any $N>1$, the {\em quantum cone} $\cO(C^N_q)$ is defined as the $*$-algebra generated by $y$ and self-adjoint $x$ subject to relations:
\begin{equation}\label{cone}
xy = q^{2N}yx, \qquad yy^* = \prod_{l=0}^{N-1}\left(1-q^{-2l}x\right), \qquad y^*y = \prod_{l=1}^{N}\left(1-q^{2l}x\right);
\end{equation}
see \cite{Brz:com}. The map  $\cO(C^N_q) \to \cO(D_q)$ defined on the generators as  $x\mapsto 1-zz^*$, $y\mapsto z^N$ provides an identification of the cone with the subalgebra of $\cO(D_q)$ consisting of all elements of degree a multiple of $N$, i.e.,
$$
\cO(C^N_q) \cong \bigoplus_{n\in \ZZ}\cO(D_q)_{nN}.
$$
Throughout we will view $\cO(C^N_q)$ as a subalgebra of $\cO(D_q)$ in this way. 

Using the $\ZZ$-grading of $\cO(D_q)$ we define a degree counting algebra automorphism, 
$$
\nu : \cO(D_q) \to \cO(D_q), \qquad a\mapsto q^{|a|} a,
$$
for all homogeneous elements of $\cO(D_q)$. Obviously, $\nu$ preserves the degrees of homogeneous elements, and hence restricts to an automorphism of $\cO(C^N_q)$. It is also compatible with the $*$-structure in the sense that, 
\begin{equation}\label{regularity}
\nu \circ * \circ \nu = * .
\end{equation}

The maps $\partial_-, \partial_+: \cO(D_q)\to \cO(D_q)$, defined on generators of the disc algebra by
\begin{equation}\label{partial}
\partial_- (z) = z^*, \quad \partial_- (z^*) = 0, \qquad \partial_+ (z) = 0, \quad \partial_+ (z^*) = q^{2} z,
\end{equation}
extend to the whole of $\cO(C^N_q)$ as $\nu^2$-skew derivations, i.e., by the twisted Leibniz rule,
\begin{equation}\label{Leibniz}
\partial_\pm (ab) = \partial_\pm(a) \nu^{2}(b) + a\partial_\pm (b),
\end{equation}
for all $a,b \in \cO(D_q)$. As a consequence of \eqref{partial} and \eqref{Leibniz}, the maps $\partial_\pm$ have degrees $\pm 2$, respectively, and since $\nu$ is the degree-counting automorphism,
\begin{equation}\label{q-skew}
\nu \circ \partial_\pm \circ \nu^{-1} = q^{\pm 2}\, \partial_\pm.
\end{equation}
The combination of \eqref{regularity}, \eqref{partial} and \eqref{Leibniz} also yields
\begin{equation}\label{pm}
\nu \left(\partial_\pm (a)^*\right) = \nu^{-1}\left(\partial_\mp(a^*)\right).
\end{equation}

Set
$$
H_+ = \bigoplus_{n\in \ZZ}\cO(D_q)_{nN+1}, 
\quad H_-= \bigoplus_{n\in \ZZ}\cO(D_q)_{nN-1}, \qquad H = H_+\oplus H_-,
$$
i.e., $H$ is the external direct sum of two vector spaces $H_+$ and $H_-$. Since $\cO(C^N_q)$  is spanned by homogeneous elements of degrees that are multiples of $N$, and $\nu$ preserves the degrees, both $H_\pm$ and hence also $H$ are left $\cO(C^N_q)$-modules (representations) by the action
\begin{equation}\label{action}
\pi(a) (h_\pm) = \nu^2(a)h_\pm, \qquad \mbox{for all $a\in \cO(C^N_q)$, $h_\pm \in H_\pm$}.
\end{equation}
Furthermore, $\nu$ restricts to an automorphism of the vector space $H$. Note, however, that since $\nu$ is an automorphism of the graded algebra $\cO(D_q)$ and the action of $\cO(C^N_q)$ is defined by the multiplication in  $\cO(D_q)$, the induced automorphism $\bar\nu$ of the (linear) endomorphism ring takes a particularly simple form on $\pi(a)$,
\begin{equation}\label{barnu}
\bar\nu(\pi(a)) = \pi(\nu(a)), \qquad \mbox{for all $a\in \cO(C^N_q)$,}
\end{equation}
so that $\hat\nu$ satisfying \eqref{proper} is simply the restriction of $\nu$ to $\cO(C^N_q)$. Since $\partial_\pm$ have degree $\pm 2$, respectively, 
\begin{equation}\label{delpm}
\partial_\pm(H_\mp) \subseteq H_\pm,
\end{equation}
and the $*$-operation reverses degrees, we can define operators
\begin{equation}\label{Dirac}
D: H \to H, \qquad (h_+,h_-)\mapsto \left(-q^{-1}\partial_+ (h_-),\ q\partial_-(h_+)\right),
\end{equation}
\begin{equation}\label{reality}
J: H\to H, \qquad (h_+, h_-)\mapsto \left(-h_-^*, h_+^*\right),
\end{equation}
and also the grading operator,
\begin{equation}\label{gamma}
\gamma:  H\to H, \qquad (h_+, h_-)\mapsto \left(h_+, -h_-\right).
\end{equation}

\begin{proposition} \label{lem.cone}
With these definitions, $(\cO(C^N_q), H, D)$ is an (algebraic) even 
spectral triple of KO-dimension two with 
$\nu$-twisted real structure $J$ and grading $\gamma$.
\end{proposition}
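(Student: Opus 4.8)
The plan is to check, one axiom at a time, all the requirements of Definition~\ref{def.reality} for the concrete data \eqref{Dirac}, \eqref{reality}, \eqref{gamma}, together with the even-spectral-triple compatibilities, and then to read off the signs $\epsilon,\epsilon',\epsilon''$ in order to identify the $KO$-dimension. The most useful preliminary computation is the explicit form of the conjugation $J\pi(b)J^{-1}$. Because the $*$-operation reverses degrees, $J$ in \eqref{reality} does map $H$ into $H$, and a direct substitution gives $J^2=-\id$ (so $\epsilon=-1$ and $J^{-1}=-J$). Using the action \eqref{action} and the identity $\bar\nu(\pi(a))=\pi(\nu(a))$ from \eqref{barnu}, one then finds that $J\pi(b)J^{-1}$ is exactly right multiplication by $\nu^2(b)^*$. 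Thus $\pi(a)$ (left multiplication by $\nu^2(a)$), $\nu$ and $\gamma$ are block-diagonal with respect to $H=H_+\oplus H_-$, while $D$ and $J$ are block-off-diagonal; keeping track of this block structure organizes the entire argument, and the automorphism $\hat\nu$ required in \eqref{proper} is already available as the restriction of $\nu$ to $\cO(C^N_q)$.

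With this in hand, the ``easy'' axioms follow quickly. The order-zero condition \eqref{o0c} is immediate, since $\pi(a)$ acts by left and $J\pi(b)J^{-1}$ by right multiplication and these commute by associativity. The twisted regularity \eqref{reg}, $\nu J\nu=J$, is a one-line consequence of \eqref{regularity}: applying $\nu J\nu$ to a homogeneous $(h_+,h_-)$ produces $(-\nu(\nu(h_-)^*),\nu(\nu(h_+)^*))$, and $\nu\circ *\circ\nu=*$ collapses each entry to $h^*$. The grading compatibilities $\gamma^2=\id$, $[\gamma,\pi(a)]=0$, $\gamma D=-D\gamma$ and $\nu^2\gamma=\gamma\nu^2$ all fall out of the block structure (with $\gamma D=-D\gamma$ expressing that $D$ is off-diagonal), and the grading condition \eqref{gc} is a direct check yielding $\gamma J=-J\gamma$, i.e. $\epsilon''=-1$.

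For the twisted $\epsilon'$-condition \eqref{tc} I would evaluate both $DJ\nu$ and $\nu JD$ on a homogeneous $(h_+,h_-)$. Each side produces terms of the shape $\partial_\pm(\nu(h_\pm)^*)$ versus $\nu(\partial_\mp(h_\pm)^*)$, and these are matched using the reality relation \eqref{pm} together with plain degree-counting, since every occurrence of $\nu^{\pm1}$ on a homogeneous element is a known power of $q$. The $q$-powers coming from the coefficients $-q^{-1}$ and $q$ in \eqref{Dirac} and from $\nu$ then cancel exactly, so that \eqref{tc} holds precisely for $\epsilon'=+1$. Combined with $\epsilon=-1$ and $\epsilon''=-1$, this is exactly the sign triple of $KO$-dimension two.

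The main obstacle is the twisted order-one condition \eqref{to1c}, where the twisted Leibniz rule \eqref{Leibniz} is the engine. Computing $[D,\pi(a)]$ on $(h_+,h_-)$, the genuine-derivative term $\partial_\pm(h)$ cancels against $\pi(a)D$, leaving the clean expression with components $-q^{-1}\partial_+(\nu^2(a))\,\nu^2(h_-)$ and $q\,\partial_-(\nu^2(a))\,\nu^2(h_+)$; in particular $[D,\pi(a)]$ is left multiplication by $\partial_\pm(\nu^2(a))$ precomposed with $\nu^2$ and the component swap. Since $J\bar\nu^2(\pi(b))J^{-1}$ is right multiplication by $\nu^4(b)^*$ (by \eqref{barnu}) while $J\pi(b)J^{-1}$ is right multiplication by $\nu^2(b)^*$, both sides of \eqref{to1c} reduce, after pushing $\nu^2$ through the product via its algebra-homomorphism property, to the single identity $\nu^2\!\left(\nu^4(b)^*\right)=\nu^2(b)^*$. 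This in turn is obtained by iterating \eqref{regularity}: from $\nu\circ *\circ\nu=*$ one gets $\nu^2\circ *\circ\nu^4=*\circ\nu^2$, which closes the argument. Finally, the remaining spectral-triple data, namely that $\pi$ is a $*$-representation and that $D$ in \eqref{Dirac} is symmetric (with $\partial_+$ and $\partial_-$ mutually adjoint up to the $q$-factors, again via \eqref{pm}), are verified with respect to the natural inner product on $H$ inherited from $\cO(D_q)$, the bounded-commutator and compact-resolvent requirements being understood in the algebraic sense.
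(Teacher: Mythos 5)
Your proof is correct and follows essentially the same route as the paper's: a direct verification of each axiom on homogeneous elements using \eqref{regularity}--\eqref{barnu}, with the same block-structure observations ($J\pi(b)J^{-1}$ acting as right multiplication by $\nu^2(b)^*$, $[D,\pi(a)]$ as twisted left multiplication by $\partial_\mp(\nu^2(a))$) and the same signs $(\epsilon,\epsilon',\epsilon'')=(-1,+1,-1)$ giving KO-dimension 2. Your reduction of the twisted order-one condition to the single identity $\nu^2\circ *\circ\nu^4 = *\circ\nu^2$ is just a slightly more explicit packaging of the paper's concluding computation.
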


\begin{proof} The order zero condition \eqref{o0c} follows immediately by the definition of $J$ and the fact that $*$ is an anti-algebra involution, the condition \eqref{reg} is a straighforward consequence of \eqref{regularity}. Checking the twisted order-one condition \eqref{to1c} and twisted 
$\epsilon'$-condition
 \eqref{tc} is a bit more involved and uses all the properties \eqref{regularity}--\eqref{barnu}. Specifically, for all $h_\pm \in H_\pm$,
$$
\nu J D(h_\pm) =  q^{\pm 1} \nu\left(\partial_\mp (h_\pm)^*\right)
= q^{\pm 1} \nu^{-1}\left(\partial_\pm (h_\pm^*)\right),
$$
by \eqref{pm}. On the other hand,
$$
DJ\nu(h_\pm) =\pm D\left(\nu^{-1}(h_\pm^*)\right) =  q^{\mp 1} \partial_\pm\left(\nu^{-1}(h_\pm^*\right))
= q^{\pm 1} \nu^{-1}\left(\partial_\pm (h_\pm^*)\right),
$$
where the first equality follows by \eqref{regularity} and the third one by \eqref{q-skew}. This proves the twisted $\epsilon'$-condition \eqref{tc}. To prove the twisted order-one condition \eqref{to1c} observe that, for all $a,b\in \cO(C^N_q)$ and $h_\pm \in H_\pm$,
\begin{eqnarray*}
[D,\pi(a)] (h_\pm) &=& 
\pm q^{\pm 1} \left( \partial_\mp\left(\nu^2(a)h_\pm\right)  - \nu^2(a) \partial_\mp\left(h_\pm\right)\right)\\
&=&\pm q^{\pm 1} \partial_\mp\left(\nu^2(a)\right)\nu^2\left(h_\pm\right) 
= \pm q^{\pm 5} \nu^2\left(\partial_\mp(a)h_\pm\right),
\end{eqnarray*}
by the $q$-skew-derivation properties \eqref{Leibniz} and \eqref{q-skew}, and
$$
J\pi(b)J^{-1} (h_\pm) = h_\pm \nu^2(b)^* = h_\pm \nu^{-2}(b^*),
$$
by \eqref{regularity}.  In view of the latter and \eqref{barnu},
$$
J\bar\nu^2(\pi(b))J^{-1} (h_\pm) = h_\pm \nu^{-4}(b^*).
$$
Putting all this together one easily checks that $J$ and $D$ satisfy the twisted order-one condition \eqref{to1c}.
The statement about the grading $\gamma$ is easy to see and we have $J^2=\epsilon = -1$,
$\epsilon' = 1$, $\epsilon" = -1$, which corresponds to the KO-dimension 2.
\end{proof}

The above construction of the twisted reality structure for a Dirac operator is not particularly specific to the case of quantum cones, but is a special case of a more general situation. The key role is played by the fact that the module structure is induced from the multiplication of a graded $*$-algebra which admits $q$-skew derivations compatible with the $*$-structure. 

Let $G$ be a group and $A = \oplus_{g\in G}A_g$ be a $G$-graded $*$-algebra, and set $B:= A_e$, where $e$ is the neutral element of $G$. The grading is assumed to be compatible with the $*$-structure in the sense that, for all $g\in G$, 
$$
A_g^* \subseteq A_{g^{-1}},
$$
so, in particular, $B$ is a $*$-subalgebra of $A$. Let $G_+ \subset G$ and set
$$
G_- := \{g^{-1} \; |\; g\in G_+\}, \qquad H_\pm = \bigoplus_{g\in G_\pm} A_g, \qquad H = H_+\oplus H_-,
$$
i.e., $H$ is the external direct sum of two vector spaces $H_+$ and $H_-$. The definition of $G_-$ ensures that $H^*_\pm \subseteq H_\mp$. 
Let $\nu$ be a graded (i.e., degree preserving) algebra automorphism of $A$ that satisfies \eqref{regularity}. Let $\partial_\pm: A\to A$ be linear maps that satisfy \eqref{Leibniz}, \eqref{q-skew}, \eqref{pm} and \eqref{delpm}. If we view $H$ as a left $B$-module by the multiplication in $A$ or  as in \eqref{action}, then the operator $D$ defined by \eqref{Dirac} has the twisted reality structure $J$ given by \eqref{reality}
and the grading $\gamma$ given by \eqref{gamma}.

 \section*{Acknowledgments}
T.\ Brzezi\'nski acknowledges support of INdAM--GNFM and would like to thank SISSA for hospitality. 
N. Ciccoli acknowledges support of INdAM--GNSAGA. A.\ Sitarz thanks Universit\`a degli Studi di Perugia for hospitality. N. Ciccoli, L. D\k{a}browski and A. Sitarz acknowledge support of the NCN grant 2012/06/M/ST1/00169.  The authors are most grateful to the referees for very helpful comments.

\end{document}